\definecolor{chianti}{rgb}{0.6,0,0}
\definecolor{meretale}{rgb}{0,0,.6}
\definecolor{leaf}{rgb}{0,.35,0}
\newtheorem{theorem}{Theorem}[section]
\newtheorem{lemma}[theorem]{Lemma}
\newtheorem{proposition}[theorem]{Proposition}
\theoremstyle{definition}
\newtheorem{example}[theorem]{Example}
\newtheorem{remark}[theorem]{Remark}
\newtheorem{question}[theorem]{Question}
\numberwithin{equation}{theorem}
\def\ge{\geqslant}
\def\le{\leqslant}
\def\lf{\lfloor}
\def\rf{\rfloor}
\def\segre{\,\#\,}
\def\to{\longrightarrow}
\def\mapsto{\longmapsto}
\def\into{\lhook\joinrel\longrightarrow}
\def\divv{\operatorname{div}}
\def\Proj{\operatorname{Proj}}
\def\fraka{\mathfrak{a}}
\def\frakm{\mathfrak{m}}
\def\AA{\mathbb{A}}
\def\FF{\mathbb{F}}
\def\NN{\mathbb{N}}
\def\PP{\mathbb{P}}
\def\QQ{\mathbb{Q}}
\def\ZZ{\mathbb{Z}}
\def\calO{\mathcal{O}}
\def\calS{\mathcal{S}}
\begin{document}
\title{On Segre products, $F$-regularity, and finite Frobenius representation type}

\author{Anurag K. Singh}
\address{Department of Mathematics, University of Utah, 155 South 1400 East, Salt Lake City, UT~84112, USA}
\email{singh@math.utah.edu}

\author{Kei-ichi Watanabe}
\address{Department of Mathematics, College of Humanities and Sciences, Nihon University, Setagaya-ku, Tokyo, 156-8550, Japan,
and\newline\indent
Organization for the Strategic Coordination of Research and Intellectual Properties, Meiji University, Higashimita 1-1-1, Tama-ku, Kawasaki, 214-8571, Japan}
\email{watnbkei@gmail.com}

\thanks{A.K.S.~was supported by NSF grants DMS~1801285 and DMS~2101671, and K.W. by JSPS Grant-in-Aid for Scientific Research~20K03522. This paper started from conversations at the \emph{Advanced Instructional School on commutative algebra and algebraic geometry in positive characteristics}, Indian Institute of Technology, Bombay. The authors take this opportunity to thank the program organizers for their hospitality.}

\dedicatory{Dedicated to Ngo Viet Trung on the occasion of his 70th birthday,\\
in celebration of his many contributions to commutative algebra}

\begin{abstract}
We study the behavior of various properties of commutative Noetherian rings under Segre products, with a special focus on properties in positive prime characteristic defined using the Frobenius endomorphism. Specifically, we construct normal graded rings of finite Frobenius representation type that are not Cohen-Macaulay.
\end{abstract}
\maketitle

\section{Introduction}

We study the behavior of various properties of commutative Noetherian rings under Segre products, with a special focus on properties in positive prime characteristic defined using the Frobenius endomorphism. Segre products of rings arise rather naturally in the context of projective varieties: while the product of affine spaces $\AA^{\!m}$ and $\AA{\!^n}$ is readily identified with $\AA^{\!m+n}$, it is the \emph{Segre embedding} that gives the product of projective spaces~$\PP^m$ and $\PP^n$ the structure of a projective variety: 
\[
\PP^m\times\PP^n\to\PP^{m+n+mn},\qquad \big((a_0,\dots,a_m), (b_0,\dots,b_n)\big)\mapsto(a_0b_0,a_0b_1,\dots,a_mb_n).
\]
At the level of homogeneous coordinate rings, this corresponds to
\[
\PP^m\times\PP^n\ =\ \Proj\FF[x_0y_0,\ x_0y_1,\ \dots,\ x_my_n],
\]
where $\PP^m\colonequals\Proj\FF[x_0,\dots,x_m]$ and $\PP^n\colonequals\Proj\FF[y_0,\dots,y_n]$.

More generally, for $\NN$-graded rings $R=\oplus_{n\ge 0}R_n$ and $S=\oplus_{n\ge 0}S_n$, finitely generated over a field $R_0=\FF=S_0$, the \emph{Segre product} of $R$ and $S$ is the $\NN$-graded ring
\[
R\segre S\colonequals\bigoplus_{n\ge 0}R_n\otimes_\FF S_n.
\]
It is readily seen that $R\segre S$ is a subring of the tensor product $R\otimes_\FF S$; moreover, $R\segre S$ is a direct summand of $R\otimes_\FF S$ as an $R\segre S$-module, equivalently the inclusion of rings
\[
R\segre S \into R\otimes_\FF S
\]
is pure; it follows from this that if $\FF$ is a field of positive characteristic, and $R$ and $S$ are~$F$-pure or $F$-regular, then the same is also true for $R\segre S$. What is perhaps surprising is that the converse also holds, provided that the $\NN$-grading on each of the rings $R$ and~$S$ is irredundant; this is proved here as Theorem~\ref{theorem:f:regular}, see also~\cite[Theorem~5.2]{Hashimoto:multigraded}. The additional hypothesis on the grading is indeed required in view of Example~\ref{example:redundant}.

While the properties $F$-purity and $F$-regularity are inherited by pure subrings, the property of being $F$-rational is not, as established by the second author in~\cite{Watanabe:boutot}. Nonetheless, we show that if $R$ and $S$ are $F$-rational rings of positive prime characteristic, then $R\segre S$ is also $F$-rational, Theorem~\ref{theorem:f:rational}. The converse to this is false, see Example~\ref{example:f:rational}.

Lastly, we turn to the property of finite Frobenius representation type (FFRT); the notion is due to Smith and Van den Bergh~\cite{Smith-VdB}, and it follows readily from their results that if~$R$ and $S$ are $\NN$-graded reduced rings, finitely generated over a perfect field $R_0=\FF=S_0$ of positive characteristic, then $R\segre S$ has FFRT. We use this to construct normal graded rings that are not Cohen-Macaulay, but have the FFRT property.

The observation that Segre products readily yield large families of normal graded rings that are not Cohen-Macaulay goes back at least to Chow~\cite{Chow}, who established necessary and sufficient conditions for the Segre product of Cohen-Macaulay rings to be Cohen-Macaulay; Hochster and Roberts~\cite[\S 14]{HR:invariants} observed that under mild hypotheses, Chow's results may be recovered via the K\"unneth formula for sheaf cohomology. Subsequently, Goto and Watanabe~\cite{GW} established a more general K\"unneth formula for local cohomology that extends this circle of ideas; this and other ingredients are summarized next.

\section{Preliminaries}

We first record the K\"unneth formula for local cohomology, \cite[Theorem~4.1.5]{GW}:

\begin{theorem}[Goto-Watanabe]
Let $R$ and $S$ be normal $\NN$-graded rings, finitely generated over a field $R_0=\FF=S_0$. Set $\frakm_R$, $\frakm_S$, and $\frakm$ to be the homogeneous maximal ideals of the rings $R$, $S$, and~$R\segre S$ respectively. Suppose~$M$ and $N$ are finitely generated $\ZZ$-graded modules over $R$ and $S$ respectively, such that $H^k_{\frakm_R}(M)=0=H^k_{\frakm_S}(N)$ for $k=0,1$.

Then, for each $k\ge 0$, the local cohomology of the $\ZZ$-graded $R\segre S$-module
\[
M \segre N\colonequals\bigoplus_{n\in\ZZ}M_n\otimes_\FF N_n
\]
is given by
\[
H^k_{\frakm}(M\segre N)\ =\ \big(M\segre H^k_{\frakm_S}(N)\big) \oplus \big(H^k_{\frakm_R}(M)\segre N\big) \oplus \bigoplus_{i+j=k+1} \big(H^i_{\frakm_R}(M)\segre H^j_{\frakm_S}(N)\big).
\]
\end{theorem}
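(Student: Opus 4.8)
The plan is to deduce the formula from the K\"unneth formula for the cohomology of coherent sheaves on a product, using the standard correspondence between $\ZZ$-graded local cohomology and the cohomology of twisted sheaves on $\Proj$. Set $X\colonequals\Proj R$ and $Y\colonequals\Proj S$, and let $\widetilde{M}$, $\widetilde{N}$ be the coherent sheaves determined by $M$ and $N$. The Segre embedding identifies $\Proj(R\segre S)$ with the product $X\times_\FF Y$; writing $p$ and $q$ for the two projections, the point is that the Segre grading is the diagonal one, so that for every $n\in\ZZ$ there is a natural isomorphism
\[
\widetilde{M\segre N}(n)\ \cong\ p^*\big(\widetilde{M}(n)\big)\otimes_{\calO_{X\times_\FF Y}}q^*\big(\widetilde{N}(n)\big)
\]
of coherent sheaves on $X\times_\FF Y$, i.e.\ the twisted sheaf of the Segre product is the external tensor product of the twisted sheaves of the factors. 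Establishing this identification with care---in particular not assuming that $R$ and $S$ are generated in degree one---is the first technical point.

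With that in hand, I would invoke the comparison between local and sheaf cohomology: for a finitely generated $\ZZ$-graded module $L$ over such a ring $A$ with irrelevant ideal $\frakm_A$ and $Z\colonequals\Proj A$, one has a degreewise exact sequence
\[
0\to H^0_{\frakm_A}(L)_n\to L_n\to H^0\big(Z,\widetilde{L}(n)\big)\to H^1_{\frakm_A}(L)_n\to 0
\]
together with isomorphisms $H^{i+1}_{\frakm_A}(L)_n\cong H^i(Z,\widetilde{L}(n))$ for all $i\ge 1$. The hypotheses $H^0_{\frakm_R}(M)=0=H^1_{\frakm_R}(M)$ say precisely that $M_n=H^0(X,\widetilde{M}(n))$ for every $n$, and likewise $N_n=H^0(Y,\widetilde{N}(n))$. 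Now applying the K\"unneth formula for sheaf cohomology over the field $\FF$ (valid since $X$ and $Y$ are proper over $\FF$, so no higher Tor terms occur) to $\widetilde{M\segre N}(n)$ gives, for each $k\ge 2$,
\[
H^k_{\frakm}(M\segre N)_n\ \cong\ H^{k-1}\big(X\times_\FF Y,\ \widetilde{M\segre N}(n)\big)\ \cong\ \bigoplus_{i+j=k-1}H^i\big(X,\widetilde{M}(n)\big)\otimes_\FF H^j\big(Y,\widetilde{N}(n)\big).
\]
I would then split this sum according to the three cases $i=0$, $j=0$, and $i,j\ge 1$. The $i=0$ summand is $M_n\otimes_\FF H^{k-1}(Y,\widetilde{N}(n))=M_n\otimes_\FF H^k_{\frakm_S}(N)_n$, which assembles to $M\segre H^k_{\frakm_S}(N)$; symmetrically the $j=0$ summand gives $H^k_{\frakm_R}(M)\segre N$; and the terms with $i,j\ge 1$ contribute $H^{i+1}_{\frakm_R}(M)_n\otimes_\FF H^{j+1}_{\frakm_S}(N)_n$ with $(i+1)+(j+1)=k+1$. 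Reindexing and observing that the vanishing hypotheses kill every summand $H^a_{\frakm_R}(M)\segre H^b_{\frakm_S}(N)$ with $a\le 1$ or $b\le 1$, this last block is exactly $\bigoplus_{a+b=k+1}H^a_{\frakm_R}(M)\segre H^b_{\frakm_S}(N)$, which is what the theorem asserts.

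It remains to treat $k=0$ and $k=1$, where the clean index shift is replaced by the four-term sequence above applied to $L=M\segre N$ over $R\segre S$. Here $H^0(X\times_\FF Y,\widetilde{M\segre N}(n))\cong H^0(X,\widetilde{M}(n))\otimes_\FF H^0(Y,\widetilde{N}(n))=M_n\otimes_\FF N_n$ by the $H^0$ case of K\"unneth, and the natural map $(M\segre N)_n\to H^0(X\times_\FF Y,\widetilde{M\segre N}(n))$ is visibly this identification; hence $H^0_{\frakm}(M\segre N)=0=H^1_{\frakm}(M\segre N)$, matching the right-hand side, whose $k=0$ and $k=1$ instances vanish by the hypotheses on $M$ and $N$. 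I expect the main obstacle to be the sheaf-theoretic groundwork of the first two steps---pinning down the external-product description of the twisted Segre sheaf and invoking the K\"unneth formula in this graded generality---after which the proof is a bookkeeping exercise that cleanly separates into the boundary summands ($i=0$ or $j=0$) and the shifted bulk ($i,j\ge1$).
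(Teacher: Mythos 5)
The paper contains no proof of this statement: it is quoted verbatim, with citation, as Theorem~4.1.5 of Goto--Watanabe \cite{GW}, so there is nothing internal to compare against. Your sketch is correct, and it is essentially the original route (and the one behind the Hochster--Roberts observation recalled in the paper's introduction): identify $\Proj(R\segre S)$ with $X\times_\FF Y$ and the sheafified shifts of $M\segre N$ with external tensor products, then combine the Serre--Grothendieck correspondence $H^{i+1}_{\frakm_A}(L)_n\cong H^i\big(\Proj A,\widetilde{L(n)}\big)$ for $i\ge 1$ (plus the four-term sequence) with the K\"unneth formula over $\FF$; your case analysis ($i=0$, $j=0$, $i,j\ge 1$, with $k=0,1$ handled separately) is exactly the right bookkeeping, and the hypotheses $H^0=H^1=0$ are used precisely where needed to absorb the boundary terms. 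The one step you flag but do not carry out, namely $\widetilde{(M\segre N)(n)}\cong p^*\big(\widetilde{M(n)}\big)\otimes q^*\big(\widetilde{N(n)}\big)$ without assuming generation in degree one, is indeed the only real technical content: it must be formulated as ``sheafify the shifted module'' rather than ``twist by $\calO(n)$,'' since $\calO(1)$ need not be invertible here, and it can be established by passing to Veronese subrings $R^{(d)}$, $S^{(d)}$ generated in degree one (which changes neither $\Proj$ nor the associated sheaves, and satisfies $\big((M\segre N)(n)\big)^{(d)}=M(n)^{(d)}\segre N(n)^{(d)}$), then checking the classical Segre identification on the affine opens $D_+(f\otimes g)$. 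One small correction: the absence of Tor terms in the K\"unneth formula comes from flatness over the field $\FF$, not from properness of $X$ and $Y$; also note your argument nowhere uses normality, so it proves the statement in slightly greater generality than quoted.
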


Our proof of Theorem~\ref{theorem:f:regular} uses the description of normal graded rings in terms of $\QQ$-divisors, due to Dolga\v cev~\cite{Dolgachev}, Pinkham~\cite{Pinkham}, and Demazure~\cite{Demazure}, that we review next. A \emph{$\QQ$-divisor} on a normal projective variety $X$ is a $\QQ$-linear combination of codimension one irreducible subvarieties of $X$. Let $D=\sum n_iV_i$ be a $\QQ$-divisor, where $n_i\in\QQ$, and the subvarieties $V_i$ are distinct. Set
\[
\lf D\rf\colonequals\sum\lf n_i\rf V_i,
\]
where $\lf n\rf$ is the greatest integer less than or equal to $n$. We define
\[
\calO_X(D)\colonequals\calO_X(\lf D\rf).
\]
Let $K(X)$ denote the field of rational functions on $X$. Each $g\in K(X)$ defines a Weil divisor~$\divv(g)$ by considering the zeros and poles of $g$ with appropriate multiplicity. As these multiplicities are integers, it follows that for a $\QQ$-divisor $D$ one has
\begin{multline*}
H^0(X,\calO_X(\lf D\rf)) = \{g\in K(X)\mid\divv(g)+\lf D\rf \ge 0 \} \\
=\ \{g\in K(X)\mid\divv(g)+ D\ge 0 \} = H^0(X,\calO_X(D)).
\end{multline*}

A $\QQ$-divisor $D$ is \emph{ample} if $ND$ is an ample Cartier divisor for some $N\in\NN$. In this case, the \emph{generalized section ring} $\varGamma_*(X,D)$ is the $\NN$-graded ring 
\[
\varGamma_*(X,D)\colonequals\bigoplus_{n\ge0}H^0(X,\calO_X(nD))T^n,
\]
where $T$ is an element of degree $1$, transcendental over $K(X)$. The following is \cite[3.5]{Demazure}:

\begin{theorem}[Demazure]
\label{theorem:Demazure}
Let $R$ be an $\NN$-graded normal domain that is finitely generated over a field $R_0$. Let $T$ be a homogeneous element of degree $1$ in the fraction field of $R$. Then there exists a unique ample $\QQ$-divisor $D$ on $X\colonequals\Proj R$ such that
\[
R_n\ =\ H^0(X,\calO_X(nD))T^n\quad\text{ for each }n\ge 0.
\]
\end{theorem}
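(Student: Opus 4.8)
The plan is to realize $R$ inside its fraction field and then to recover $D$ from the divisorial valuations on $X$. Since $T$ is homogeneous of degree $1$, the graded field $\ff(R)$ equals $K(X)[T,T^{-1}]$, where $K(X)\colonequals\ff(R)_0$ is the function field of $X$; every homogeneous element of degree $n$ has the form $gT^n$ with $g\in K(X)$, so $R_n\subseteq K(X)T^n$. Writing $I_n\colonequals R_nT^{-n}\subseteq K(X)$, the asserted identity $R_n=H^0(X,\calO_X(nD))T^n$ is equivalent to $I_n=H^0(X,\calO_X(nD))$, and I would aim to prove the latter for a divisor $D$ constructed below.

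For the construction, recall that since $R$ is a normal Noetherian domain one has $R=\bigcap_\frakp R_\frakp$, the intersection taken over height-one primes; as $R$ is graded and the elements $gT^n$ are homogeneous, it suffices to intersect over \emph{homogeneous} height-one primes. Those not containing $R_+$ are exactly the primes $\frakp_V$ attached to the prime divisors $V$ of $X=\Proj R$, and for each such $V$ the degree-zero homogeneous localization $R_{(\frakp_V)}$ is the DVR $\calO_{X,V}$ with normalized valuation $\mathrm{ord}_V$ on $K(X)$. Let $v_V$ denote the valuation of the DVR $R_{\frakp_V}$ on $\ff(R)$; set $a_V\colonequals v_V(T)$, and note that the restriction of $v_V$ to $K(X)$ is a positive integer multiple $e_V\,\mathrm{ord}_V$, since $v_V$ is nonnegative on $\calO_{X,V}=R_{(\frakp_V)}\subseteq R_{\frakp_V}$ and strictly positive on its maximal ideal. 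I then define $c_V\colonequals a_V/e_V\in\QQ$ and $D\colonequals\sum_V c_VV$; this is a finite sum because $v_V(T)=0$ for all but the finitely many $V$ in the support of the divisor of $T$.

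The heart of the argument is the degree-$n$ comparison. For $g\in K(X)$ one has $v_V(gT^n)=e_V\,\mathrm{ord}_V(g)+na_V$, so $gT^n\in R_{\frakp_V}$ if and only if $\mathrm{ord}_V(g)\ge -na_V/e_V$, equivalently $\mathrm{ord}_V(g)+\lf nc_V\rf\ge 0$, using that $\mathrm{ord}_V(g)\in\ZZ$ and $\lceil -x\rceil=-\lf x\rf$. Intersecting over all $V$ then yields $I_n=\{g\in K(X):\mathrm{ord}_V(g)+\lf nc_V\rf\ge 0\ \text{for all }V\}=H^0(X,\calO_X(\lf nD\rf))=H^0(X,\calO_X(nD))$, as desired. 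I regard this valuation-theoretic step as the main obstacle: it is here that one must justify the reduction to homogeneous primes, identify $R_{(\frakp_V)}$ with $\calO_{X,V}$, and---crucially---track the ramification index $e_V$, since it is precisely the primes with $e_V>1$ that force $D$ to acquire genuinely fractional coefficients.

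It remains to see that $D$ is ample and unique. For ampleness, finite generation of $R$ over $R_0$ provides an integer $N$ for which the Veronese subring $R^{(N)}$ is generated in degree one; then $\calO_X(ND)$ is the very ample invertible sheaf $\calO_{\Proj R^{(N)}}(1)$, so $ND$ is an ample Cartier divisor and $D$ is an ample $\QQ$-divisor. For uniqueness, ampleness guarantees that $\calO_X(nD)$ is globally generated for all large $n$, whence $-\lf nc_V\rf=\min\{\mathrm{ord}_V(g):0\ne g\in H^0(X,\calO_X(nD))\}$ for each $V$; thus the section spaces $H^0(X,\calO_X(nD))\subseteq K(X)$ determine $\lf nc_V\rf$ for all large $n$, and hence $c_V=\lim_{n\to\infty}\lf nc_V\rf/n$. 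Two ample $\QQ$-divisors with the same section spaces therefore share every coefficient, so $D$ is unique.
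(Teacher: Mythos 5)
The paper itself does not prove this statement---it is quoted from Demazure \cite[3.5]{Demazure}---so there is no internal proof to compare against; your argument is essentially the standard construction (Demazure's, in the form recounted in \cite{Watanabe:demazure}). Most of it is sound: the reduction to homogeneous height-one primes is legitimate (a non-homogeneous height-one prime of a graded domain contains no nonzero homogeneous element, so it cannot contain the denominator of a homogeneous fraction), and since $R_0$ is a field, the irrelevant ideal is the homogeneous maximal ideal, so for $\dim R\ge 2$ no height-one prime contains it and nothing is lost in passing to the primes $\frakp_V$; the identification of $v_V|_{K(X)}$ as $e_V\,\mathrm{ord}_V$, the floor computation giving $I_n=H^0(X,\calO_X(nD))$, and the uniqueness argument via global generation of $\calO_X(nD)$ for $n\gg 0$ are all correct.

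The one genuine gap is the ampleness step. You assert that if $R^{(N)}$ is generated in degree one then ``$\calO_X(ND)$ is the very ample invertible sheaf $\calO_{\Proj R^{(N)}}(1)$,'' and conclude that $ND$ is an ample Cartier divisor; but nothing you have proved shows that $ND$ is even an \emph{integral} divisor---a priori $Nc_V=Na_V/e_V$ could be fractional, in which case $\calO_X(ND)=\calO_X(\lf ND\rf)$ and $ND$ could not be Cartier, so $D$ would not be an ample $\QQ$-divisor in the paper's sense. (That the assertion has real content is shown by the fact that its analogue for $N=1$ is false in general.) The gap is fillable with the tools you already set up: since $R^{(N)}$ is generated in degree one, $X$ is covered by the open sets $D_+(f)$ with $f\in R_N$. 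Write $f=hT^N$ with $h\in K(X)$. For every prime divisor $W$ meeting $D_+(f)$ one has $v_W(f)=0$, so the identity $v_W(f)=e_W\,\mathrm{ord}_W(h)+Na_W$ gives
\[
\mathrm{ord}_W(h)\ =\ -\,\frac{Na_W}{e_W}\ =\ -Nc_W,
\]
which shows both that $Nc_W\in\ZZ$ (every $W$ meets some $D_+(f)$, so $ND$ is integral) and that $h$ generates $\calO_X(ND)$ on $D_+(f)$, because on a normal variety sections of $\calO_X(ND)$ over an open set are exactly the functions with $\mathrm{ord}_W\ge -Nc_W$ at all $W$ meeting it. Hence $ND$ is Cartier with local equations $h^{-1}$, and multiplication by $T^N$ identifies $\calO_X(ND)$ with $\calO_{\Proj R^{(N)}}(1)$, which is very ample; this is exactly what your ampleness claim, and the global-generation input to your uniqueness argument, require.
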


Let $D=\sum(s_i/t_i)V_i$ be a $\QQ$-divisor where the $V_i$ are distinct, $s_i$ and $t_i$ are relatively prime integers, and $t_i>0$. Following \cite[Theorem~2.8]{Watanabe:demazure}, the \emph{fractional part} of $D$ is
\[
D'\colonequals\sum\frac{t_i-1}{t_i}V_i.
\]
This definition is motivated by the fact that one then has
\[
-\lf -nD\rf\ =\ \lf D'+nD\rf
\]
for each integer $n$, so that taking the graded dual of
\[
{[H^{\dim R}_\frakm(R)]}_{-n}\ =\ H^{\dim X}(X,\calO_X(-nD))
\]
yields
\[
{[\omega_R]}_n\ =\ H^0(X,\calO_X(K_X+D'+nD)),
\]
where $\omega_R$ is the graded canonical module of $R\colonequals\varGamma_*(X,D)$, and $K_X$ is the canonical divisor of $X$. The following is \cite[Theorem~3.3]{Watanabe:dim2}; note that
\[
H^{\dim X}(X,\calO_X(K_X+D'))\ =\ H^{\dim X}(X,\calO_X(K_X))
\]
is the rank one vector space ${[H^{\dim R}_\frakm(\omega_R)]}_0$.

\begin{theorem}[Watanabe]
\label{theorem:Watanabe}
Let $X$ be a normal projective variety of characteristic $p>0$, and $K_X$ its canonical divisor. Let $D$ be an ample $\QQ$-divisor, and set $R\colonequals\varGamma_*(X,D)$. Then:
\begin{enumerate}[\ \rm(i)]
\item The ring $R$ is $F$-pure if and only if the Frobenius map below is injective:
\[
F\colon H^{\dim X}(X,\calO_X(K_X+D'))\to H^{\dim X}(X,\calO_X(pK_X+pD')).
\]
\item Let $\eta$ be a nonzero element of $H^{\dim X}(X,\calO_X(K_X+D'))$. Then the ring $R$ is $F$-regular if and only if for each integer $n>0$ and each nonzero element $c$ of $H^0(X,\calO_X(nD))$, there exists an integer $e>0$ such that $cF^e(\eta)$ is a nonzero element of 
\[
H^{\dim X}(X,\calO_X(p^e(K_X+D')+nD)).
\]
\end{enumerate}
\end{theorem}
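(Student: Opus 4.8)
The plan is to dualize both conditions into statements about the Frobenius action on the top local cohomology of the canonical module, reduce each to its one-dimensional socle, and then identify that socle and the action sheaf-theoretically through Demazure's Theorem~\ref{theorem:Demazure} and Serre duality on $X$.

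First I would set up the duality. Write $E$ for the graded injective hull of $\FF=R/\frakm$, and let $\Theta\colon E\to E$ be the $p$-linear Frobenius operator induced by the Frobenius endomorphism of $R$. Since $R$ is normal, hence $S_2$, graded local duality gives ${H^{\dim R}_\frakm(\omega_R)}^\vee\cong\operatorname{Hom}_R(\omega_R,\omega_R)=R$, so $H^{\dim R}_\frakm(\omega_R)$ is the graded Matlis dual $R^\vee$; being Artinian it has essential, one-dimensional socle, and the identification ${[H^{\dim R}_\frakm(\omega_R)]}_0=H^{\dim X}(X,\calO_X(K_X+D'))$ recorded before the theorem places a socle generator $\eta$ in that rank-one space. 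Transporting $\Theta$ through Demazure's isomorphism $R_n=H^0(X,\calO_X(nD))T^n$ and Serre duality, the operator becomes the $p$-th power map on \v{C}ech cocycles; on the socle it is exactly
\[
F\colon H^{\dim X}(X,\calO_X(K_X+D'))\to H^{\dim X}(X,\calO_X(pK_X+pD')),
\]
and, more generally, cup product of $F^e(\eta)$ with $c\in R_n=H^0(X,\calO_X(nD))$ lands in $H^{\dim X}(X,\calO_X(p^e(K_X+D')+nD))$. The rounding identity $-\lf -nD\rf=\lf D'+nD\rf$ and its Frobenius twists are what make these sheaf identifications compatible with the grading.

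For part~(i) I would invoke the standard fact that $R$ is $F$-pure if and only if $\Theta$ is injective, this being the expression on the injective hull of the purity of $R\to F_*R$. The kernel of a $p$-linear operator is an $R$-submodule, and the socle is essential, so $\Theta$ is injective precisely when it does not kill $\eta$; as the socle is one-dimensional this is injectivity of the displayed map $F$, which is~(i). For part~(ii) I would instead use that $R$ is $F$-regular if and only if the tight closure $0^*_E$ of zero in $E$ vanishes. Again $0^*_E$ is an $R$-submodule and the socle is essential, so $0^*_E=0$ if and only if $\eta\notin 0^*_E$, i.e.\ no single nonzero $c$ annihilates $F^e(\eta)$ for all $e\gg0$. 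Splitting $c$ into homogeneous components, the degree-zero part being a unit and hence harmless, lands exactly on the condition that for every $n>0$ and every nonzero $c\in H^0(X,\calO_X(nD))$ there is an $e>0$ with $cF^e(\eta)\neq0$ in $H^{\dim X}(X,\calO_X(p^e(K_X+D')+nD))$, which is~(ii).

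The hard part will be the identification in the second paragraph: checking that the abstract Frobenius operator on $H^{\dim R}_\frakm(\omega_R)$ really is the concrete $p$-th power map on the divisorial sheaves $\calO_X(K_X+D'+nD)$, with the fractional part $D'$ entering in precisely the right place. This requires tracking the integral rounding through the identity $-\lf -nD\rf=\lf D'+nD\rf$ and its Frobenius-twisted analogues, and doing so without assuming that $R$ is Cohen-Macaulay, since the intended applications are non-Cohen-Macaulay. The point that rescues the argument is that only the one-dimensional socle $\eta$ and the modules generated from it are ever needed, and these already live in the top local cohomology $H^{\dim R}_\frakm(\omega_R)$ of the canonical module whether or not $R$ is Cohen-Macaulay; this is exactly why both criteria can be phrased entirely on $X$.
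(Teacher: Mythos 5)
Your argument has to stand on its own here, because the paper offers no proof of this statement: it is quoted directly from \cite[Theorem~3.3]{Watanabe:dim2}. The architecture you propose (identify $E\cong H^{\dim R}_\frakm(\omega_R)$ by graded local duality, characterize $F$-regularity by $0^*_E=0$, and reduce everything to the one-dimensional socle by essentiality) is sound, and those reductions are correct. But your starting point contains a genuine error: for a normal graded ring that is not quasi-Gorenstein, the Frobenius endomorphism of $R$ does \emph{not} induce a $p$-linear operator $\Theta\colon E\to E$. Frobenius acts naturally on $H^i_\frakm(R)$, not on $H^{\dim R}_\frakm(\omega_R)$; there is no natural $p$-linear self-map of $\omega_R$ (what exists naturally is the Cartier trace $F_*\omega_R\to\omega_R$, going the other way). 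What purity of $R\to F_*R$ gives, tested against $E$, is injectivity of the canonical $p$-linear map $E\to E\otimes_R F_*R$, whose target is \emph{not} $E$. This is not a notational quibble: it is exactly why the map $F$ in the theorem goes from $H^{\dim X}(X,\calO_X(K_X+D'))$ to the \emph{different} space $H^{\dim X}(X,\calO_X(pK_X+pD'))$. The fact you invoke, ``$R$ is $F$-pure iff the Frobenius action on $E$ is injective,'' is the (quasi-)Gorenstein form of Fedder's criterion, and it is precisely what is unavailable in the non-Cohen-Macaulay situations this theorem is meant to serve. Your essentiality argument does survive the correction, since the kernel of a $p$-linear map with any target is still an $R$-submodule of $E$.

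The second, and decisive, problem is that what you set aside as ``the hard part'' is the entire content of the theorem, and it is not carried out. One must identify, degree by degree, the modules $E\otimes_R F^e_*R$, together with the multiplication by $c\in R_n$ on them, with the groups $H^{\dim X}(X,\calO_X(p^e(K_X+D')+nD))$; that is, one must show the purity and tight-closure maps become $p^e$-th power maps on divisorial sheaves with the round-down $\lf p^e(K_X+D')+nD\rf$ --- and not $p^e\lf K_X+D'\rf$ --- coming out right. Concretely this is done by decomposing $R^{1/p^e}$ into divisorial modules attached to $\QQ$-divisors and applying duality on $X$, which is where $D'$ genuinely enters; without this step your argument reduces the theorem to an unproved assertion of essentially equal strength. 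A smaller gap of the same nature occurs in (ii): you equate ``$\eta\notin 0^*_E$'' (no single $c$ kills $F^e(\eta)$ for all $e\gg0$) with ``for every nonzero $c$ there exists $e>0$ with $cF^e(\eta)\neq0$.'' These quantifiers do not match for free: given $c$ with $cF^e(\eta)=0$ only for $e\ge e_0$, one must replace $c$ by $dc$ with $d$ in a high power of $\frakm$, using that these modules are $\frakm$-torsion, to kill the finitely many remaining terms. That is a standard but necessary argument, and it should appear.
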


\section{$F$-regularity and $F$-purity}

The theory of tight closure was introduced by Hochster and Huneke in \cite{HH:JAMS}, and further developed in the graded context in~\cite{HH:JAG}. A ring $R$ of positive prime characteristic is \emph{weakly $F$-regular} if each ideal of $R$ equals its tight closure, while $R$ is \emph{$F$-regular} if each localization of $R$ is weakly $F$-regular. Following \cite[page 166]{Hochster:tc}, a ring $R$ of positive prime characteristic is \emph{strongly $F$-regular} if $N_M^*=N$ for each pair of $R$-modules $N\subseteq M$. When~$R$ is an $\NN$-graded ring that is finitely generated over a field $R_0$ of positive characteristic, as is the case in the present paper, the properties of weak $F$-regularity, $F$-regularity, and strong~$F$-regularity coincide by \cite[Corollary~3.3]{Lyubeznik-Smith}.

The following theorem may be viewed as an extension of~\cite[Theorem~5.2]{Hashimoto:multigraded}, where it was proved under the hypothesis that the  rings contain homogeneous elements of degree~$1$:

\begin{theorem}
\label{theorem:f:regular}
Let $R$ and $S$ be normal $\NN$-graded rings, finitely generated over a perfect field $R_0=\FF=S_0$ of positive characteristic. Suppose that the fraction fields of $R$ as well as~$S$ contain homogeneous elements of degree $1$.

Then the Segre product $R\segre S$ is $F$-regular (respectively, $F$-pure) if and only if $R$ and $S$ are F-regular (respectively, $F$-pure).
\end{theorem}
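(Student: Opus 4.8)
The plan is to prove the converse implication—that $F$-regularity (respectively $F$-purity) of $R\segre S$ forces both $R$ and $S$ to have the property—since the forward implication (that $R$ and $S$ having the property implies $R\segre S$ does) is exactly the purity statement recorded in the introduction. Throughout, write $X\colonequals\Proj R$ and $Y\colonequals\Proj S$, and use the hypothesis on degree-$1$ elements to invoke Demazure's Theorem~\ref{theorem:Demazure}: there are ample $\QQ$-divisors $D_X$ on $X$ and $D_Y$ on $Y$ with $R=\varGamma_*(X,D_X)$ and $S=\varGamma_*(Y,D_Y)$. Set $Z\colonequals X\times Y$ with projections $\pi_X,\pi_Y$, and $D_Z\colonequals\pi_X^*D_X+\pi_Y^*D_Y$. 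Since $\FF$ is perfect, $Z$ is a normal projective variety and $R\segre S$ is a normal graded domain, and applying the K\"unneth formula to the degree-$n$ graded pieces identifies
\[
(R\segre S)_n=H^0(X,\calO_X(nD_X))\otimes_\FF H^0(Y,\calO_Y(nD_Y))=H^0(Z,\calO_Z(nD_Z)),
\]
so that $R\segre S=\varGamma_*(Z,D_Z)$ and $\Proj(R\segre S)=Z$. Because $\pi_X^*D_X$ and $\pi_Y^*D_Y$ have disjoint supports, the fractional parts add, $D_Z'=\pi_X^*D_X'+\pi_Y^*D_Y'$, and $K_Z=\pi_X^*K_X+\pi_Y^*K_Y$; we may thus apply Watanabe's Theorem~\ref{theorem:Watanabe} on $Z$.

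The first step is to decompose the cohomology. For divisorial sheaves the K\"unneth formula gives, for every $k$,
\[
H^k\big(Z,\calO_Z(\pi_X^*A+\pi_Y^*B)\big)=\bigoplus_{i+j=k}H^i(X,\calO_X(A))\otimes_\FF H^j(Y,\calO_Y(B)),
\]
and in top degree $k=\dim Z=\dim X+\dim Y$ only the summand with $i=\dim X$, $j=\dim Y$ survives. Moreover, since $\FF$ is perfect, under this identification the Frobenius on $Z$ is the tensor product $F_X\otimes F_Y$ of the Frobenius maps on $X$ and $Y$. This immediately settles $F$-purity: the groups $H^{\dim X}(X,\calO_X(K_X+D_X'))$ and $H^{\dim Y}(Y,\calO_Y(K_Y+D_Y'))$ are each one-dimensional over $\FF$ (being the rank-one spaces noted just before Theorem~\ref{theorem:Watanabe}), so their tensor product is generated by $\eta_X\otimes\eta_Y$, and the Frobenius
\[
F_X\otimes F_Y\colon H^{\dim X}(X,\calO_X(K_X+D_X'))\otimes_\FF H^{\dim Y}(Y,\calO_Y(K_Y+D_Y'))\to H^{\dim Z}(Z,\calO_Z(p(K_Z+D_Z')))
\]
is injective if and only if $F_X(\eta_X)\otimes F_Y(\eta_Y)\ne0$, equivalently $F_X(\eta_X)\ne0$ and $F_Y(\eta_Y)\ne0$, that is, both $F_X$ and $F_Y$ are injective. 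By Theorem~\ref{theorem:Watanabe}(i) this says precisely that $R\segre S$ is $F$-pure if and only if $R$ and $S$ are.

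For $F$-regularity we use Theorem~\ref{theorem:Watanabe}(ii) with $\eta\colonequals\eta_X\otimes\eta_Y$. Assume $R\segre S$ is $F$-regular; by symmetry it suffices to show $R$ is. Fix $n>0$ and a nonzero $c^X\in H^0(X,\calO_X(nD_X))$. The key device is that it is enough to find $e>0$ and a nonzero homogeneous multiple $c^Xb$, with $b\in R_\ell$, for which $c^Xb\,F_X^e(\eta_X)\ne0$, since $c^Xb\,F_X^e(\eta_X)=b\cdot\big(c^XF_X^e(\eta_X)\big)$ then forces $c^XF_X^e(\eta_X)\ne0$. As the degree-$1$ hypothesis makes $\{m:R_m\ne0\}$ and $\{m:S_m\ne0\}$ numerical semigroups of gcd $1$, we may choose $\ell$ large enough that both $R_\ell\ne0$ and $S_{n+\ell}\ne0$; pick nonzero $b\in R_\ell$ and $d^Y\in S_{n+\ell}$. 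Then $c\colonequals(c^Xb)\otimes d^Y$ is a nonzero element of $(R\segre S)_{n+\ell}$, so $F$-regularity of $R\segre S$ yields $e>0$ with
\[
cF^e(\eta)=\big(c^Xb\,F_X^e(\eta_X)\big)\otimes\big(d^YF_Y^e(\eta_Y)\big)\ne0,
\]
whence $c^Xb\,F_X^e(\eta_X)\ne0$ and therefore $c^XF_X^e(\eta_X)\ne0$. Thus $R$ satisfies the criterion of Theorem~\ref{theorem:Watanabe}(ii) and is $F$-regular.

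I expect the main obstacles to be bookkeeping rather than conceptual. The first is justifying that the Frobenius on $Z$ is $F_X\otimes F_Y$ under the K\"unneth identification; this is where perfectness of $\FF$ is essential, and it also underlies the normality of $Z$ and of $R\segre S$ needed to realize the Segre product as $\varGamma_*(Z,D_Z)$. The second is the degree matching in the $F$-regular case: degrees $n$ with $S_n=0$ are invisible to the Segre product, so one cannot directly test an element of $R_n$ against $R\segre S$. The multiply-up step repairs exactly these missing degrees, and it is here that the hypothesis on degree-$1$ elements enters—consistent with its necessity exhibited by Example~\ref{example:redundant}.
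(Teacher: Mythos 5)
Your proposal is correct and follows the same route as the paper: the forward implication via purity of $R\segre S$ in $R\otimes_\FF S$, and the converse via Demazure's theorem on $Z=X\times Y$, the K\"unneth decomposition of top cohomology, and Watanabe's criterion; your $F$-purity argument is identical to the paper's. The one place you genuinely diverge is the $F$-regularity direction, and it is a refinement: the paper fixes $n>0$ and tests a nonzero $c_1\in H^0(X,\calO_X(nD_X))$ against a nonzero $c_2\in H^0(Y,\calO_Y(nD_Y))$, tacitly assuming $S_n\neq 0$ for that same $n$ --- but the hypothesis only guarantees that the degree semigroups of $R$ and $S$ have gcd $1$, so $S_n$ may well vanish in the degrees one needs to test. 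Your multiply-up device (replace $c^X$ by $c^Xb$ with $b\in R_\ell$, choose $d^Y\in S_{n+\ell}$ for $\ell\gg 0$, and cancel $b$ at the end via $(c^Xb)F_X^e(\eta_X)=b\cdot\big(c^XF_X^e(\eta_X)\big)$) closes exactly this gap, so your writeup is, if anything, more complete on this point. One small imprecision to fix: $\pi_X^*D_X$ and $\pi_Y^*D_Y$ do not have disjoint supports (they meet in codimension two); what you need, and what is true, is that they have no common prime divisors, so that rounding --- and hence taking fractional parts --- can be done component by component, yielding $D_Z'=\pi_X^*D_X'+\pi_Y^*D_Y'$.
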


\begin{proof}
If the rings $R$ and $S$ are $F$-regular or $F$-pure, then the same holds for their tensor product~$R\otimes_\FF S$, see for example the proof of $2\implies 3$ in~\cite[Theorem~5.2]{Hashimoto:multigraded}. The property, then, is inherited by the pure subring $R\segre S$; it is only the converse that requires the additional hypothesis on the grading:

Let $D_X$ and $D_Y$ be ample $\QQ$-divisors on $X\colonequals\Proj R$ and $Y\colonequals\Proj S$ respectively, such that~$R=\varGamma_*(X,D_X)$ and $S=\varGamma_*(Y,D_Y)$. Set $Z\colonequals X\times Y$, and let $\pi_1\colon Z\to X$ and $\pi_2\colon Z\to Y$ be the respective projection morphisms. For each integer $n\ge 0$ one has
\[
H^0(Z,\calO_Z(\pi_1^*(nD_X)+\pi_2^*(nD_Y)))\ =\ H^0(X,\calO_X(nD_X))\otimes H^0(Y,\calO_X(nD_Y)),
\]
from which it follows that
\[
R\segre S\ =\ \varGamma_*(Z,\pi_1^*(D_X)+\pi_2^*(D_Y)).
\]
Setting $D_Z\colonequals\pi_1^*(D_X)+\pi_2^*(D_Y)$, one has
\[
D_Z'\ =\ \pi_1^*(D_X') + \pi_2^*(D_Y'),
\]
so the Frobenius map~$F$ as in Theorem~\ref{theorem:Watanabe}~(i) takes the form
\[
\CD
H^{d_1+d_2}(Z,\calO_Z(K_Z+D_Z')) @>\cong>> H^{d_1}(X,\calO_X(K_X+D_X'))\otimes H^{d_2}(Y,\calO_Y(K_Y+D_Y'))\\
@VFVV @VFVV\\
H^{d_1+d_2}(Z,\calO_Z(pK_Z+pD_Z')) @>\cong>> H^{d_1}(X,\calO_X(pK_X+pD_X'))\otimes H^{d_2}(Y,\calO_Y(pK_Y+pD_Y'))
\endCD
\]
where $d_1\colonequals\dim X$ and $d_2\colonequals\dim Y$. Let $\eta_1$ and $\eta_2$ be nonzero elements of the rank one vector spaces $H^{d_1}(X,\calO_X(K_X+D'))$ and $H^{d_2}(Y,\calO_Y(K_Y+D_Y'))$ respectively.

If $R\segre S$ is $F$-pure, the injectivity of the vertical arrows in the diagram displayed above implies that $F(\eta_1\otimes\eta_2)=F(\eta_1)\otimes F(\eta_2)$ is nonzero, and hence that the maps
\[
\CD
H^{d_1}(X,\calO_X(K_X+D_X')) @>F>> H^{d_1}(X,\calO_X(pK_X+pD_X'))
\endCD
\]
and
\[
\CD
H^{d_2}(Y,\calO_Y(K_Y+D_Y')) @>F>> H^{d_2}(Y,\calO_Y(pK_Y+pD_Y'))
\endCD
\]
are injective; it follows that the rings $R$ and $S$ are $F$-pure.

Next, assume that $R\segre S$ is $F$-regular. Fix $n>0$, and consider nonzero elements
\[
c_1\in H^0(X,\calO_X(nD_X))\qquad\text{ and }\qquad c_2\in H^0(Y,\calO_Y(nD_Y)).
\]
Then $c_1\otimes c_2$ is a nonzero element of $H^0(Z,\calO_X(nD_Z))$, so the $F$-regularity of $R\segre S$ implies that there exists $e>0$ such that
\[
(c_1\otimes c_2)F^e(\eta_1\otimes\eta_2)\ =\ c_1F^e(\eta_1)\otimes c_2F^e(\eta_2)
\]
is a nonzero element of
\begin{multline*}
H^{d_1+d_2}(Z,\calO_Z(p^e(K_Z+D_Z')+nD_Z))\\
\cong\
H^{d_1}(X,\calO_X(p^e(K_X+D_X')+nD_X))\otimes H^{d_2}(Y,\calO_Y(p^e(K_Y+D_Y')+nD_Y)).
\end{multline*}
But then the elements
\[
c_1F^e(\eta_1)\ \in\ H^{d_1}(X,\calO_X(p^e(K_X+D_X')+nD_X))
\]
and
\[
c_2F^e(\eta_2)\ \in\ H^{d_2}(Y,\calO_Y(p^e(K_Y+D_Y')+nD_Y))
\]
are nonzero, implying that the rings $R$ and $S$ are $F$-regular.
\end{proof}

The hypothesis that the $\NN$-grading on $R$ and $S$ is irredundant is indeed required:

\begin{example}
\label{example:redundant}
Consider the hypersurface $R\colonequals\FF_{\!2}[x,y,z]/(x^2+y^3+z^3)$ where $x,y,z$ have degrees $3,2,2$, respectively, and $S\colonequals\FF_{\!2}[u,v]$ where $u$ and $v$ have degree $2$. The ring $R$ is not~$F$-pure or $F$-regular since the element $x$ belongs to the Frobenius closure of the ideal~$(y,z)R$. However, since the ring $S$ is supported only in even degrees, one has
\[
R\segre S\ =\ R^{(2)}\segre S\ =\ \FF_{\!2}[y,z]\segre\FF_{\!2}[u,v]\ =\ \FF_{\!2}[uy,uz,vy,vz],
\]
which is $F$-regular. Note that while the fraction field of $R$ contains homogeneous elements of degree $1$, the fraction field of $S$ does not.
\end{example}

\section{$F$-rationality}

Following \cite[page~125]{Hochster:tc}, a local ring of positive prime characteristic is \emph{$F$-rational} if it is a homomorphic image of a Cohen-Macaulay ring, and each ideal generated by a system of parameters is tightly closed; a Noetherian ring of positive prime characteristic is \emph{$F$-rational} if its localization at each maximal ideal (equivalently, at each prime ideal) is~$F$-rational. With this definition, an $F$-rational ring is normal and Cohen-Macaulay.

For the case of interest in this paper, let $R$ be an $\NN$-graded normal domain that is a finitely generated algebra over a field $R_0$ of positive characteristic. Then $R$ is $F$-rational if and only if the ideal generated by some (equivalently, any) homogeneous system of parameters for~$R$ is tightly closed; see~\cite[Theorem~4.7]{HH:JAG} and the preceding remark.

Smith~\cite{Smith:ratsing} proved that $F$-rational rings have rational singularities; the converse, more precisely the theorem that rings with rational singularities have $F$-rational type, is due independently to Hara~\cite{Hara:AJM} and to Mehta and Srinivas~\cite {Mehta-Srinivas}. 

Let $R$ be a finitely generated algebra over a field of characteristic zero; Boutot's theorem states that if $R$ has rational singularities, then so does each pure subring of $R$, \cite{Boutot}. The corresponding statement for $F$-rational rings turns out to be false: in~\cite{Watanabe:boutot} the second author constructed an example of an $F$-rational ring with a pure subring that is not $F$-rational. Nonetheless, we have:

\begin{theorem}
\label{theorem:f:rational}
Suppose $R$ and $S$ are $F$-rational $\NN$-graded rings, finitely generated over a perfect field $R_0=\FF=S_0$ of positive characteristic. Then $R\segre S$ is $F$-rational. 
\end{theorem}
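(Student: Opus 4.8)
The plan is to use the graded characterization of $F$-rationality: a normal $\NN$-graded ring $T$, finitely generated over a field $T_0$ of characteristic $p>0$, with homogeneous maximal ideal $\frakm_T$ and $\dim T=d$, is $F$-rational if and only if it is Cohen-Macaulay and the tight closure of zero in its top local cohomology vanishes, $0^*_{H^d_{\frakm_T}(T)}=0$; see \cite{Smith:ratsing} together with \cite[Theorem~4.7]{HH:JAG}. So there are two things to establish for $R\segre S$: Cohen-Macaulayness, and the vanishing of $0^*$ in its top local cohomology. As in the proof of Theorem~\ref{theorem:f:regular} I would write $R=\varGamma_*(X,D_X)$, $S=\varGamma_*(Y,D_Y)$, and $R\segre S=\varGamma_*(Z,D_Z)$ with $Z=X\times Y$, $d_1\colonequals\dim X$, $d_2\colonequals\dim Y$. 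The cases $\dim R\le 1$ or $\dim S\le 1$ are disposed of first: there $R\segre S$ is, up to a Veronese re-grading, a copy of $S$ or of $R$, for which $F$-rationality is immediate. Thus assume $\dim R,\dim S\ge 2$, so $R$ and $S$ are normal of depth at least two and the hypotheses $H^k_{\frakm_R}(R)=0=H^k_{\frakm_S}(S)$ for $k=0,1$ of the Goto--Watanabe formula \cite{GW} hold with $M=R$ and $N=S$.

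For Cohen-Macaulayness I would feed $M=R$, $N=S$ into the K\"unneth formula. Since $R$ and $S$ are $F$-rational they are Cohen-Macaulay, so $H^k_{\frakm_R}(R)=0$ unless $k=d_1+1$ and $H^k_{\frakm_S}(S)=0$ unless $k=d_2+1$; the formula then leaves only three possibly nonzero summands, in cohomological degrees $d_1+1$, $d_2+1$, and $d_1+d_2+1$. The two lower summands vanish because an $F$-rational graded ring has negative $a$-invariant: the graded piece $[H^{d_2+1}_{\frakm_S}(S)]_n=H^{d_2}(Y,\calO_Y(nD_Y))$ is zero for $n\ge 0$, whereas $R_n\ne 0$ forces $n\ge 0$, so the disjoint supports give $R\segre H^{d_2+1}_{\frakm_S}(S)=0$, and symmetrically $H^{d_1+1}_{\frakm_R}(R)\segre S=0$. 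Hence $H^k_{\frakm}(R\segre S)=0$ for $k<d_1+d_2+1=\dim(R\segre S)$, so $R\segre S$ is Cohen-Macaulay, with top local cohomology
\[
H^{d_1+d_2+1}_{\frakm}(R\segre S)\ =\ H^{d_1+1}_{\frakm_R}(R)\segre H^{d_2+1}_{\frakm_S}(S).
\]

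It remains to prove $0^*=0$ in this module. Set $A\colonequals H^{d_1+1}_{\frakm_R}(R)$ and $B\colonequals H^{d_2+1}_{\frakm_S}(S)$; the identification above is compatible with the Frobenius actions, since the K\"unneth isomorphism is induced by the product structure on $Z=X\times Y$, so $F$ acts on $A\segre B$ by $\alpha\otimes\beta\mapsto F(\alpha)\otimes F(\beta)$. The idea is that $A\segre B$ embeds, $F$-equivariantly and $R\segre S$-linearly, into $A\otimes_\FF B$ as the diagonal-degree part, and that $A\otimes_\FF B$ is exactly the top local cohomology $H^{d_1+d_2+2}_{\frakM}(R\otimes_\FF S)$ of the tensor product, again with Frobenius $F\otimes F$. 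A putative witness $cF^e(\eta)=0$ for $\eta\in 0^*_{A\segre B}$ over $R\segre S$ then transfers to $R\otimes_\FF S$ once the witness is taken in $(R\otimes_\FF S)^\circ$; choosing homogeneous $c_1\in R$, $c_2\in S$ of a common degree with $R_{c_1}$, $S_{c_2}$ regular, the localization $(R\segre S)_{c_1\otimes c_2}$ is a direct summand of the regular ring $R_{c_1}\otimes_\FF S_{c_2}=(R\otimes_\FF S)_{c_1\otimes c_2}$, hence strongly $F$-regular, so a power $c\colonequals(c_1\otimes c_2)^N$ is a test element of $R\segre S$ and simultaneously a nonzerodivisor of the (reduced, as $\FF$ is perfect) ring $R\otimes_\FF S$. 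This exploits the inclusion $R\segre S\into R\otimes_\FF S$ in the direction that works — transferring a relation \emph{up} to the overring, rather than trying to descend $F$-rationality to the summand, which fails in general by \cite{Watanabe:boutot}. Thus $0^*_{A\segre B}\subseteq 0^*_{A\otimes_\FF B}$, and it suffices to show that $R\otimes_\FF S$ is $F$-rational, i.e. that $0^*_{A\otimes_\FF B}=0$.

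This final product statement is where I expect the real difficulty. For $\eta=\sum_{l=1}^r\alpha_l\otimes\beta_l$ of minimal tensor rank, with $\{\alpha_l\}$ and $\{\beta_l\}$ each $\FF$-linearly independent, one must rule out that the product test element $c=c_1\otimes c_2$ annihilates all Frobenius powers of $\eta$. Using $0^*_A=0$, $0^*_B=0$ and the $F$-injectivity of $R$ and $S$ (a consequence of $F$-rationality), each $c_1F^e(\alpha_l)$ and $c_2F^e(\beta_l)$ is nonzero for $e\gg 0$, and since $\FF$ is perfect the semilinear injections $F^e$ preserve $\FF$-linear independence, so the families $\{F^e(\alpha_l)\}$ and $\{F^e(\beta_l)\}$ stay independent. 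The genuine obstacle is to exclude cancellation in $\sum_l c_1F^e(\alpha_l)\otimes c_2F^e(\beta_l)$, that is, to secure an $e$ for which $\{c_1F^e(\alpha_l)\}_l$ and $\{c_2F^e(\beta_l)\}_l$ are \emph{simultaneously} independent, forcing this rank-$r$ tensor to be nonzero and contradicting $\eta\in 0^*$. I would attack this through Smith's description of $0^*_{H^{\mathrm{top}}}$ as the unique maximal proper Frobenius-stable submodule, so that $F$-rationality of $R$ and $S$ says precisely that $A$ and $B$ carry no proper nonzero Frobenius-stable $R$- (resp. $S$-) submodule; the assertion then becomes that the external tensor product over the perfect field $\FF$ of two such simple objects is again simple, equivalently that $F$-rationality is preserved under tensor products over a perfect field. (Dually, one may phrase everything in terms of the canonical modules $\omega_R$, $\omega_S$ reviewed in the preliminaries and mirror the single-tensor computation of Theorem~\ref{theorem:f:regular}; the core difficulty is identical.) Combined with the Cohen-Macaulayness above, this yields that $R\segre S$ is $F$-rational.
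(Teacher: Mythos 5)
Your overall strategy coincides with the paper's: use the K\"unneth formula to get Cohen--Macaulayness and the identification $H^d_\frakm(R\segre S)=H^{\dim R}_{\frakm_R}(R)\segre H^{\dim S}_{\frakm_S}(S)$, then transfer a putative tight closure witness \emph{up} into $H^{\dim R}_{\frakm_R}(R)\otimes_\FF H^{\dim S}_{\frakm_S}(S)$, viewed as the top local cohomology of $R\otimes_\FF S$, so that everything reduces to the $F$-rationality of the tensor product $R\otimes_\FF S$. Up to that point your argument is sound (indeed, your handling of the low-dimensional cases and your care in choosing a witness that is a nonzerodivisor in the reduced ring $R\otimes_\FF S$ are slightly more scrupulous than the paper's write-up).

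The genuine gap is exactly the step you flag as ``where I expect the real difficulty'': you never prove that $R\otimes_\FF S$ is $F$-rational, and your sketched attacks do not close it. The cancellation problem for $\sum_l c_1F^e(\alpha_l)\otimes c_2F^e(\beta_l)$ is not a technicality one can expect to dispatch by linear-independence bookkeeping: the Frobenius acts $p^e$-semilinearly over $\FF$ and semilinearly over the two different rings $R$ and $S$, so independence of $\{F^e(\alpha_l)\}$ and of $\{F^e(\beta_l)\}$ separately does not control the specific linear combination above, and the ``tensor product of simple Frobenius-stable modules is simple'' assertion is precisely a restatement of the theorem you need, not a reduction of it (simplicity of tensor products of simple objects fails in general module-theoretic settings, so some real input is required). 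The paper fills this hole by citation: $R\otimes_\FF S$ is $F$-rational by Hashimoto, \cite[Corollary~6.8]{Hashimoto:cmfi}, which is a nontrivial theorem about base change/tensor products of $F$-rational rings over a perfect field. So your proposal correctly reduces the theorem to the right statement, but that statement is the crux, and leaving it unproved (and uncited) leaves the proof incomplete.
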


\begin{proof}
Note that $R$ and $S$ are Cohen-Macaulay; it suffices to assume that they have positive dimension, in which case $a(R)<0$ and $a(S)<0$ by \cite[Satz~3.1]{Flenner} or \cite[Theorem~2.2]{Watanabe:k:star}. Using this, the K\"unneth formula shows that $R\segre S$ is Cohen-Macaulay and that
\begin{equation}
\label{equation:lc}
H^d_\frakm(R\segre S)\ =\ H^{\dim R}_{\frakm_R}(R)\segre H^{\dim S}_{\frakm_S}(S),
\end{equation}
where $d\colonequals\dim(R\segre S)$, and $\frakm_R$, $\frakm_S$, and $\frakm$ are the homogeneous maximal ideals of the rings~$R$, $S$, and~$R\segre S$ respectively. The hypothesis that $\FF$ is perfect ensures that the ring~$R\segre S$ is normal. By~\cite[Corollary~6.8]{Hashimoto:cmfi}, the ring $R\otimes_\FF S$ is $F$-rational.
 
It suffices to show that the zero submodule of~\eqref{equation:lc} is tightly closed. Suppose, to the contrary, that $c$ and $\eta$ are nonzero homogenous elements of $R\segre S$ and~$H^d_\frakm(R\segre S)$ respectively, with $cF^e(\eta)=0$ in $H^d_\frakm(R\segre S)$ for $e\gg0$. It follows that $cF^e(\eta)$ is also zero for~$e\gg0$, when regarded as an element of
\[
H^{\dim R}_{\frakm_R}(R)\otimes_\FF H^{\dim S}_{\frakm_S}(S).
\]
But then $\eta$, regarded as an element of the module above, is in the tight closure of zero; this contradicts the $F$-rationality of $R\otimes_\FF S$.
\end{proof}

In contrast with Theorem~\ref{theorem:f:regular}, $R\segre S$ may be $F$-rational even when $R$ and $S$ are not:

\begin{example}
\label{example:f:rational}
Let $\FF$ be a field of positive characteristic, and consider the hypersurfaces
\[
R\colonequals\FF[x,y,z]/(x^2+y^3+z^7)\qquad\text{ and }\qquad S\colonequals\FF[u,v,w]/(u^4+v^5+w^5),
\]
with $x,y,z$ having degrees $21,14,6$ respectively, and $u,v,w$ having degrees $5,4,4$ respectively. Then $a(R)=1$ and $a(S)=7$, so $R$ and $S$ are not $F$-rational. Note that the gradings are irredundant, i.e., as in the hypotheses of Theorem~\ref{theorem:f:regular}, the fraction fields of $R$ as well as $S$ contain homogeneous elements of degree $1$.

Since ${[H^2_{\frakm_R}(R)]}_{\ge 0}$ is supported only in degree $1$, and ${[H^2_{\frakm_S}(S)]}_{\ge 0}$ in degrees $2$, $3$, and~$7$, the K\"unneth formula shows that $R\segre S$ is Cohen-Macaulay, and also that $a(R\segre S)=-5$. Suppose that the characteristic of $\FF$ is at least $7$. Then the Frobenius action on each of
\[
{[H^2_{\frakm_R}(R)]}_{\le -5}\qquad\text{ and }\qquad {[H^2_{\frakm_S}(S)]}_{\le -5}
\]
and hence on $H^2_{\frakm_R}(R)\segre H^2_{\frakm_S}(S)$ is injective. Moreover, we claim that $R\segre S$ has an isolated non $F$-regular point: to see this, let $r\otimes s$ be a nonzero homogeneous element of $R\segre S$ of positive degree; then the ring
\[
(R\segre S)_{r\otimes s}\ =\ R_r\segre S_s
\]
is a pure subring of the regular ring $R_r\otimes_\FF S_s$, and is hence $F$-regular. It follows that $R\segre S$ is $F$-rational by \cite[Theorem~7.1]{HH:JAG}.
\end{example}

\section{Finite Frobenius representation type}

The notion of rings of finite Frobenius representation type (FFRT) is due to Smith and Van den Bergh; it is an essential ingredient in their proof of the following remarkable theorem: If $R$ is a graded direct summand of a polynomial ring over a perfect field $\FF$ of positive characteristic, then the ring of $\FF$-linear differential operators on $R$ is a simple ring, see~\cite[Theorem~1.3]{Smith-VdB}. This is striking in that the corresponding statement is not known for polynomial rings over fields of characteristic zero.

Subsequently, the FFRT property has found several other applications: Seibert \cite{Seibert} proved that over rings with FFRT, the Hilbert-Kunz multiplicity is rational; tight closure commutes with localization for rings with FFRT by Yao \cite{Yao}; if $R$ is a Gorenstein ring with FFRT, Takagi and Takahashi \cite{Takagi-Takahashi} proved that each local cohomology module of the form~$H^k_\fraka(R)$ has finitely many associated primes; the Gorenstein hypothesis may be removed, as proved subsequently by Hochster and N\'u\~nez-Betancourt \cite{Hochster-NB}.

A reduced ring $R$ of positive prime characteristic $p$, satisfying the Krull-Schmidt theorem, is said to have \emph{finite Frobenius representation type} if there exists a finite set~$\calS$ of~$R$-modules such that for each~$q=p^e$, each indecomposable summand of $R^{1/q}$ is isomorphic to an element of $\calS$. When $R$ is Cohen-Macaulay, each indecomposable summand of~$R^{1/q}$ is a maximal Cohen-Macaulay $R$-module; thus, Cohen-Macaulay rings of finite representation type have FFRT, though the latter property is much weaker: e.g., in the graded setting, the FFRT property is inherited by direct summands, \cite[Proposition~3.1.6]{Smith-VdB}.

Key examples of rings with FFRT include those that are graded direct summands of polynomial rings; such rings are also $F$-regular, and hence Cohen-Macaulay. Recent work on the FFRT property includes that of Hara and Ohkawa~\cite{HO}, where they study the property for $2$-dimensional normal graded rings in terms of $\QQ$-divisors, and~\cite{RSV1, RSV2} where Raedschelders, \v Spenko, and Van den Bergh prove that over an algebraically closed field of characteristic $p\ge\max\{n-2,3\}$, the Pl\"ucker homogeneous coordinate ring of the Grassmannian~$G(2,n)$ has FFRT.

Our goal here is to construct normal rings with FFRT that are not Cohen-Macaulay. Note that a Stanley-Reisner ring over a perfect field has FFRT by \cite[Example~2.36]{Kamoi}, though such a ring need not be Cohen-Macaulay. Our interest here, however, is primarily in normal domains. We first record:

\begin{lemma}
Let $\FF$ be a perfect field of positive characteristic, and let $R$ and $S$ be reduced rings that are finitely generated $\FF$-algebras. Suppose, moreover, that $R$, $S$, and $R\otimes_\FF S$ satisfy the Krull-Schmidt theorem. Then, if $R$ and $S$ have FFRT, so does $R\otimes_\FF S$.
\end{lemma}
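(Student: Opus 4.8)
The plan is to reduce the FFRT of $R\otimes_\FF S$ to that of the factors by exploiting the compatibility of Frobenius roots with tensor products over a perfect field. The crucial structural input is that, for each $q=p^e$, there is an isomorphism of $R\otimes_\FF S$-modules
\[
(R\otimes_\FF S)^{1/q}\ \cong\ R^{1/q}\otimes_\FF S^{1/q},
\]
where $R^{1/q}$ and $S^{1/q}$ carry their natural $R$- and $S$-module structures. First I would establish this identity. It holds because, for $\FF$-algebras, extracting $q$-th roots commutes with the tensor product over $\FF$ up to replacing the base field by $\FF^{1/q}$; perfectness of $\FF$ supplies $\FF^{1/q}=\FF$, so that the tensor product may indeed be taken over $\FF$ itself. (Here $R^{1/q}$ is a finitely generated $R$-module, as $R$ is $F$-finite, being finitely generated over the perfect field~$\FF$.)

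Next, let $\calS_R$ and $\calS_S$ be finite sets of modules witnessing the FFRT property of $R$ and $S$ respectively. For each $q$, applying the Krull-Schmidt theorem to $R$ and $S$ gives decompositions
\[
R^{1/q}\ \cong\ \bigoplus_i M_i\qquad\text{and}\qquad S^{1/q}\ \cong\ \bigoplus_j N_j,
\]
with each $M_i$ isomorphic to an element of $\calS_R$ and each $N_j$ isomorphic to an element of $\calS_S$. Combining these with the identity above yields
\[
(R\otimes_\FF S)^{1/q}\ \cong\ \bigoplus_{i,j}\big(M_i\otimes_\FF N_j\big),
\]
so every indecomposable summand of $(R\otimes_\FF S)^{1/q}$ arises, up to isomorphism, as an indecomposable summand of some $M\otimes_\FF N$ with $M\in\calS_R$ and $N\in\calS_S$.

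I would then set $\calS$ to be the collection of all indecomposable $R\otimes_\FF S$-summands of the modules $M\otimes_\FF N$, as $M$ ranges over $\calS_R$ and $N$ over $\calS_S$. Since $\calS_R$ and $\calS_S$ are finite, and each $M\otimes_\FF N$ is a finitely generated $R\otimes_\FF S$-module admitting, by the Krull-Schmidt theorem for $R\otimes_\FF S$, only finitely many indecomposable summands up to isomorphism, the set $\calS$ is finite. An appeal to the uniqueness in Krull-Schmidt for $R\otimes_\FF S$ then shows that every indecomposable summand of $(R\otimes_\FF S)^{1/q}$ is isomorphic to an element of $\calS$, for every $q$, which is exactly the FFRT property.

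The step requiring the most care is the first: verifying that $(R\otimes_\FF S)^{1/q}\cong R^{1/q}\otimes_\FF S^{1/q}$ genuinely respects the $R\otimes_\FF S$-module structure, and that perfectness of $\FF$ is used correctly to reconcile the base field over which one tensors. Once this identity is in hand, the remainder is a bookkeeping argument. The only other point of substance is that $M\otimes_\FF N$ need not be indecomposable even when $M$ and $N$ are; this is precisely why the hypothesis that $R\otimes_\FF S$ satisfies the Krull-Schmidt theorem is invoked, allowing each such module to be split into finitely many indecomposables while keeping the candidate set $\calS$ finite.
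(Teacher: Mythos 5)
Your proposal is correct and follows essentially the same route as the paper: identify $(R\otimes_\FF S)^{1/q}$ with $R^{1/q}\otimes_\FF S^{1/q}$ via perfectness of $\FF$, tensor the Krull-Schmidt decompositions of $R^{1/q}$ and $S^{1/q}$, and then use the Krull-Schmidt property of $R\otimes_\FF S$ to split each of the finitely many modules $M_i\otimes_\FF N_j$ into a fixed finite collection of indecomposables. Your write-up is in fact slightly more careful than the paper's, which leaves the identity $(R\otimes_\FF S)^{1/q}\cong R^{1/q}\otimes_\FF S^{1/q}$ and the uniqueness appeal implicit.
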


\begin{proof}
If $R$ and $S$ have FFRT, there exist indecomposable $R$-modules $M_1,\dots,M_m$, and indecomposable $S$-modules~$N_1,\dots,N_n$ such that for each $q=p^e$, one has
\[
R^{1/q}\ \cong\ \bigoplus M_i\quad\text{ and }\quad S^{1/q}\ \cong\ \bigoplus N_j
\]
where, in each case, the index set depends on $q$, and modules may be repeated within the direct sum. Set $T\colonequals R\otimes_\FF S$. Then 
\[
T^{1/q}\ \cong\ R^{1/q}\otimes_\FF S^{1/q}\ \cong\ \big(\bigoplus M_i\big)\otimes_\FF\big(\bigoplus N_j\big)\ \cong\ \bigoplus\big(M_i \otimes_\FF N_j\big).
\]
Each of the $mn$ modules of the form $M_i \otimes_\FF N_j$ is a direct sum of finitely many indecomposable $T$-modules. This provides a finite set of indecomposable $T$-modules that contains an isomorphic copy of each indecomposable summand of $T^{1/q}$ for each $q=p^e$.
\end{proof}

\begin{proposition}
\label{proposition:ffrt}
Let $R$ and $S$ be $\NN$-graded reduced rings, finitely generated over a perfect field $R_0=\FF=S_0$ of positive characteristic. If $R$ and $S$ have FFRT, then the rings~$R\otimes_\FF S$ and~$R\segre S$ also have FFRT.
\end{proposition}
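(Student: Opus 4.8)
The plan is to establish the two assertions in succession, first deriving the FFRT of $R\otimes_\FF S$ from the preceding lemma and then transferring it to the Segre product. For $R\otimes_\FF S$ I would verify the hypotheses of that lemma and apply it; the substantive point is its Krull-Schmidt requirement. Now $R$ and $S$ satisfy Krull-Schmidt because they have FFRT, so it remains to see that $R\otimes_\FF S$ does too. Give $R\otimes_\FF S$ its total grading, $(R\otimes_\FF S)_n=\bigoplus_{i+j=n}R_i\otimes_\FF S_j$; this is an $\NN$-graded ring, finitely generated over $\FF$, with degree-zero component $\FF$, hence graded-local, and over such a ring the category of finitely generated graded modules is Krull-Schmidt, since the ring of graded endomorphisms of a graded-indecomposable finitely generated module is local. (Reducedness of $R\otimes_\FF S$, also required, is where perfectness of $\FF$ enters.) The lemma then yields that $R\otimes_\FF S$ has FFRT.

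For $R\segre S$ I would use that it is a graded direct summand of $R\otimes_\FF S$ as an $R\segre S$-module: the pure inclusion $R\segre S\hookrightarrow R\otimes_\FF S$ recorded in the introduction splits $R\segre S$-linearly. The ring $R\segre S$ is itself $\NN$-graded, finitely generated over $\FF$, with degree-zero part $\FF$, so it is likewise Krull-Schmidt and FFRT is meaningful for it. Since the FFRT property is inherited by graded direct summands -- Proposition~3.1.6 of \cite{Smith-VdB}, already invoked above -- the FFRT of $R\otimes_\FF S$ from the previous step passes to $R\segre S$.

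The step I expect to be the main obstacle is precisely this direct-summand descent, since $R\otimes_\FF S$ is in general \emph{not} module-finite over $R\segre S$ -- already $\FF[x,y]$ is not finite over $\FF[xy]$ -- so naively restricting the finite family $\calS$ that governs $(R\otimes_\FF S)^{1/q}$ to $R\segre S$-modules need not produce finitely generated modules. What makes the descent legitimate is the bigraded structure: as a graded $R\segre S$-module, $R\otimes_\FF S=\bigoplus_{t\in\ZZ}\big(\bigoplus_n R_n\otimes_\FF S_{n+t}\big)$ is a direct sum of finitely generated graded $R\segre S$-modules indexed by the off-diagonal parameter $t$. Applying $(-)^{1/q}$ to the splitting realizes $(R\segre S)^{1/q}$ as a graded $R\segre S$-direct summand of $(R\otimes_\FF S)^{1/q}$; the graded direct-summand inheritance of Smith and Van den Bergh then ensures that only finitely many graded-indecomposable $R\segre S$-modules occur as $q$ ranges over the powers of $p$. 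It is this implication that I would invoke rather than reprove.
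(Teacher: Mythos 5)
Your proof follows the paper's own argument exactly: the tensor product case is handled by the preceding lemma, with the Krull--Schmidt hypothesis supplied by the observation that an $\NN$-graded finitely generated $\FF$-algebra with degree-zero part a field satisfies Krull--Schmidt, and the Segre product case by citing \cite[Proposition~3.1.6]{Smith-VdB} applied to $R\segre S$ as a graded direct summand of $R\otimes_\FF S$. Your closing discussion of why that citation applies even though $R\otimes_\FF S$ is not module-finite over $R\segre S$ is a sound elaboration of the cited result, not a different route.
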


\begin{proof}
The statement regarding the tensor product follows immediately from the lemma, bearing in mind that the Krull-Schmidt theorem holds for $\NN$-graded rings $A$ with $A_0$ a field.

The assertion about the Segre product follows from \cite[Proposition~3.1.6]{Smith-VdB}, since $R\segre S$ is a graded direct summand of the tensor product $R\otimes_\FF S$.
\end{proof}

\begin{example}
\label{example:ffrt}
Let $\FF$ be a perfect field of characteristic $p\ge 7$, and consider the hypersurface~$R\colonequals\FF[x,y,z]/(x^2+y^3-z^p)$, with $x,y,z$ having degrees $3p,2p,6$ respectively. Note that the ring $R$ is sandwiched between $A\colonequals\FF[x,y]$ and $A^{1/p}=\FF[x^{1/p},y^{1/p}]$, since
\[
z\ =\ x^{2/p}+y^{3/p}.
\]
As $A$ is a polynomial ring, and hence has finite representation type, it follows that $R$ has~FFRT by~\cite[Observation~3.7, Theorem~3.10]{Shibuta}. Set $S\colonequals\FF[u,v]$, where $u$ and $v$ are indeterminates with degree $1$. Then the ring $R\segre S$ has FFRT by Proposition~\ref{proposition:ffrt}. However, since~$a(R)=p-6>0$, the K\"unneth formula shows that $R\segre S$ is not Cohen-Macaulay.
\end{example}

\begin{remark}
The examples above are characteristic-specific: to illustrate, let~$p\ge 7$ be a prime integer, and let $\FF$ now be an \emph{arbitrary} field. Set $\PP^1\colonequals\Proj\FF[u,v]$, with points of~$\PP^1$ parametrized by $u/v$. If $p=6k+1$, consider the $\QQ$-divisor
\begin{equation}
\label{equation:D:1}
D\colonequals\frac{1}{2}(0)-\frac{1}{3}(\infty)-\frac{k}{p}(-1).
\end{equation}
Then $\varGamma_*(\PP^1,D)\colonequals\bigoplus H^0(\PP^1,nD)T^n$ is the $\FF$-algebra generated by 
\[
z\colonequals\frac{v^2(u+v)}{u^3}T^6, \quad y\colonequals\frac{v^{4k+1}(u+v)^{2k}}{u^{6k+1}}T^{2p}, \quad x\colonequals\frac{v^{6k+1}(u+v)^{3k}}{u^{9k+1}}T^{3p},
\]
where $T$ is an indeterminate of degree one. It is readily seen that $\varGamma_*(\PP^1,D)$ is a hypersurface with defining equation $z^p=x^2+y^3$.

If $p=6k-1$, consider instead the $\QQ$-divisor
\begin{equation}
\label{equation:D:2}
D\colonequals\frac{1}{3}(\infty)+\frac{k}{p}(-1)-\frac{1}{2}(0).
\end{equation}
In this case, $\varGamma_*(\PP^1,D)$ is the $\FF$-algebra generated by 
\[
z\colonequals\frac{u^3}{v^2(u+v)}T^6, \quad y\colonequals\frac{u^{6k-1}}{v^{4k-1}(u+v)^{2k}}T^{2p}, \quad x\colonequals\frac{u^{9k-1}}{v^{6k-1}(u+v)^{3k}}T^{3p}.
\]
Once again, $\varGamma_*(\PP^1,D)$ is a hypersurface with defining equation $z^p=x^2+y^3$.

Note that the denominators occurring in the $\QQ$-divisor $D$ in~\eqref{equation:D:1} and~\eqref{equation:D:2} are $2$, $3$, and $p$. It follows from \cite[Theorem~7.2]{HO} that if the characteristic of $\FF$ is not $2$, $3$, or $p$, then the hypersurface~$\FF[x,y,z]/(x^2+y^3-z^p)$ does not have FFRT.
\end{remark}

This raises the question:

\begin{question}
\label{question:ffrt:cm}
Let $R$ be a normal graded domain, finitely generated over a field of characteristic zero. If $R$ has dense FFRT type, i.e., there exists a dense set of prime integers~$p$ for which the mod $p$ reductions $R_p$ have FFRT, then is $R$ a Cohen-Macaulay ring?
\end{question}

A related question is the following; see also \cite[Question~9.1]{Mallory:ffrt}.

\begin{question}
\label{question:ffrt:sfr}
Let $R$ be a normal graded domain, finitely generated over a field of characteristic zero. If $R$ has dense FFRT type, then is $R$ an $F$-regular ring?
\end{question}

The converse is false: \cite[Theorem~5.1]{Singh-Swanson} provides an example of an $F$-regular hypersurface $R$, over a field of characteristic zero, for which each mod $p$ reduction $R_p$ has a local cohomology module of the form $H^3_I(R_p)$ that has infinitely many associated prime ideals; it follows from \cite[Theorem~3.9]{Takagi-Takahashi} or \cite[Theorem~5.7]{Hochster-NB} that, for each prime integer $p$, the mod $p$ reduction $R_p$ does not have FFRT.

\section*{Acknowledgments}

The authors thank Mitsuyasu Hashimoto and Karl Schwede for useful discussions.


\end{document}